\newtheorem{thm}{Theorem}[section]
\newtheorem{theorem}[thm]{Theorem}
\newtheorem{problem}[thm]{Problem}
\newtheorem{definition}[thm]{Definition}
\theoremstyle{remark}
\newtheorem{remark}[thm]{Remark}
\begin{document}

\title{bases equivalent to the unit vector basis of $c_0$ or $\ell_p$}
\author[Casazza]{Peter G. Casazza}
\address{Department of Mathematics, University
of Missouri, Columbia, MO 65211-4100}

\thanks{The author was supported by
 NSF DMS 1906025}

\email{Casazzap@missouri.edu}

\begin{abstract}
We will show that an unconditional basis in a Banach space is equivalent to the unit vector basis of
$c_0$ or $\ell_p$ for $1\le p < \infty$ if and only if all finitely supported blocks  of the basis 
generated by a unit vector and its dual basis are uniformly
equivalent to the basis or all such blocks are uniformly complemented.
\end{abstract}

\maketitle

\section{Introduction}

There are several classical results concerning when a basis for a Banach space is equivalent to the unit vector basis
of $c_0$ or $\ell_p$ for $1\le p < \infty$.

\begin{definition}
Let $\{x_n\}_{n=1}^{\infty}$ be a basic sequence in a Banach space $X$. A sequence of non-zero vectors $\{u_j\}_{j=1}^{\infty}$ in X
of the form 
\[ u_j=\sum_{n=p_j+1}^{p_{j+1}}b_nx_n
\] with $\{b_n\}_{n=1}^{\infty}$ scalars and $p_1<p_2<\cdots$ an increasing sequence of
integers, is called a {\bf block basic sequence} or briefly a {\bf block basis} of $\{x_n\}_{n=1}^{\infty}$.
\end{definition}

\begin{definition}
If $\{x_n\}_{n=1}^{\infty}$ and $\{y_n\}_{n=1}^{\infty}$ are bases for Banach spaces $X,Y$ respectively, $\{x_n\}_{n=1}^{\infty}$ is 
{\bf K-equivalent} to $\{y_n\}_{n=1}^{\infty}$ if there is a constant $1\le K<\infty$ so that for all scalars $\{a_n\}_{n=1}^{\infty}$ we have
\[ \frac{1}{K}\|\sum_{n=1}^{\infty}a_ny_n\|\le |\sum_{n=1}^{\infty}a_nx_n\|\le K \|\sum_{n=1}^{\infty}a_ny_n\|.\]
Equivalently, if $T:X\rightarrow Y$ is given by $Tx_n=y_n$ then $\|T\|,\|T^{-1}\|\le K$. We say $\{x_n\}_{n=1}^{\infty}$ is {\bf equivalent}
to $\{y_n\}_{n=1}^{\infty}$ if they are K-equivalent for some K.
\end{definition}

In \cite{LT} we have:

\begin{theorem}
let $X$ be a Banach space with a normalized basis $\{x_n\}_{n=1}^{\infty}$. Then $\{x_n\}_{n=1}^{\infty}$ is equivalent
to the unit vector basis of $c_0$ or $\ell_p$, $1\le p < \infty$ if and only if $\{x_n\}_{n=1}^{\infty}$ is equivalent to
all of its normalized block bases.
\end{theorem}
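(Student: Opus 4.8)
The plan is to dispose of the converse quickly and then concentrate on the implication that perfect homogeneity (equivalence to every normalized block basis) forces $\{x_n\}$ to be a $c_0$- or $\ell_p$-basis. For the converse, suppose $\{x_n\}$ is $K$-equivalent to the unit vector basis $\{e_n\}$ of $\ell_p$. Given a normalized block basis $u_j=\sum_{n\in I_j}b_nx_n$, the disjointly supported vectors $\hat u_j=\sum_{n\in I_j}b_ne_n$ satisfy $\|\sum_j a_j\hat u_j\|_p=(\sum_j|a_j|^p\|\hat u_j\|_p^p)^{1/p}$, and since $\|\hat u_j\|_p\in[1/K,K]$ this lies within a factor $K$ of $(\sum_j|a_j|^p)^{1/p}$. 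Transporting through the $K$-equivalence shows $\{u_j\}$ is $K^2$-equivalent to $\{e_j\}$, hence to $\{x_n\}$; the $c_0$ case is the same with the sup norm.

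The first step of the main direction is to upgrade the hypothesis to a single constant: there is $K$ so that every normalized block basis is $K$-equivalent to $\{x_n\}$. I would argue by contradiction with a gliding-hump/diagonal construction — using that a normalized block basis of a normalized block basis is again a normalized block basis of $\{x_n\}$, one splices witnesses of ever-larger distortion, living on disjoint tail segments, into a single normalized block basis that cannot be equivalent to $\{x_n\}$. This uniformization is the standard technical (but not conceptual) hurdle. With $K$ in hand, two structural facts are immediate: taking each block to be a single signed vector $u_j=\varepsilon_j x_j$ shows $\{x_n\}$ is $K$-unconditional, and taking blocks to be single vectors $u_j=x_{n_j}$ shows $\{x_n\}$ is $K$-equivalent to all of its subsequences (subsymmetric).

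Next I introduce the fundamental function $\lambda_n=\|\sum_{i=1}^n x_i\|$ (so $\lambda_1=1$) and establish that it is almost multiplicative. Grouping the first $mn$ vectors into $m$ consecutive blocks of length $n$, $u_j=\sum_{i=(j-1)n+1}^{jn}x_i$, subsymmetry gives $\|u_j\|\in[\lambda_n/K,K\lambda_n]$, so the normalized blocks $\tilde u_j=u_j/\|u_j\|$ form a normalized block basis. Evaluating $\|\sum_{j=1}^m\tilde u_j\|$ in two ways — through $K$-equivalence of this block basis to $\{x_j\}$, which gives $\approx\lambda_m$, and through unconditionality (each $\|u_j\|^{-1}\approx\lambda_n^{-1}$), which gives $\approx\lambda_{mn}/\lambda_n$ — yields
\[ \tfrac{1}{K^3}\,\lambda_m\lambda_n\le\lambda_{mn}\le K^3\,\lambda_m\lambda_n. \]
Since also $1/K\le\lambda_n\le n$ and $\lambda$ is nondecreasing up to a constant (bounded coordinate projections), the elementary theory of almost-multiplicative functions gives either $\lambda_n$ bounded or $\lambda_n\approx n^{1/p}$ for a unique $p\in[1,\infty)$; the bounded case will correspond to $c_0$.

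The final and hardest step is to promote the power law for $\lambda_n$ to full equivalence with the $\ell_p$-norm, i.e. $\|\sum_i a_ix_i\|\approx(\sum_i|a_i|^p)^{1/p}$; note this cannot follow from the triangle inequality alone, as the naive blocking estimate is lossy, so perfect homogeneity must be used directly. The key device is a repetition construction: given nonnegative coefficients $a_i$ with $a_i^p$ rational, choose a large integer $R$ so that $r_i=Ra_i^p\in\NN$, take disjoint consecutive sets $J_i$ with $|J_i|=r_i$, and define $v_i=(R^{1/p}a_i)^{-1}\sum_{l\in J_i}x_l$. Then $\|v_i\|\approx\lambda_{r_i}/(R^{1/p}a_i)\approx r_i^{1/p}/(R^{1/p}a_i)=1$, so $\{v_i\}$ is a normalized block basis, while by construction $\sum_i a_iv_i=R^{-1/p}\sum_{l=1}^{N}x_l$ with $N=\sum_i r_i$. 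Hence, by $K$-equivalence of $\{v_i\}$ to $\{x_i\}$ and the power law,
\[ \Big\|\sum_i a_ix_i\Big\|\approx\Big\|\sum_i a_iv_i\Big\|=R^{-1/p}\lambda_N\approx R^{-1/p}N^{1/p}=\Big(\sum_i a_i^p\Big)^{1/p}. \]
A density argument extends this to all coefficients, unconditionality removes the sign and positivity assumptions, and the bounded-$\lambda$ case is handled by the same scheme to give the $c_0$-norm. I expect this last step, together with the careful bookkeeping of the multiplicative constants through the repetition construction, to be the main obstacle.
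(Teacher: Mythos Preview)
The paper does not prove this statement; it is quoted from \cite{LT} (it is essentially Zippin's theorem) as background, so there is no proof in the paper to compare your attempt against.

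Your outline is the classical one and is sound. Two small remarks. In the repetition step your $v_i$ are only approximately normalized, so to invoke the hypothesis you must pass to $\tilde v_i=v_i/\|v_i\|$ and use unconditionality to absorb the bounded factors $\|v_i\|\in[c,C]$ into the constants; this is routine but should be said. In the bounded-$\lambda$ case you do not need ``the same scheme'': once you have $K$-unconditionality and normalization, $\max_i|a_i|\le K\|\sum_ia_ix_i\|$ from the basis projections and $\|\sum_ia_ix_i\|\le K(\sup_n\lambda_n)\max_i|a_i|$ from unconditionality give $c_0$ directly.

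For context, when the paper proves its own main theorem (the implication $(2)\Rightarrow(1)$ there) it follows the same beginning as you---introduce $\lambda(n)$, derive almost-multiplicativity from the uniform block hypothesis---but then, instead of your explicit repetition construction, it cites \cite{LT} Theorem~2.a.9 for the power law $\lambda(n)\approx n^{1/p}$ and \cite{LT} Theorem~3.a.10 for the upper $\ell_p$-estimate, obtaining the lower estimate by running the same argument on the dual basis and dualizing. Your route is more self-contained and handles both inequalities in one stroke via the repetition device; the paper's route is shorter on the page because it delegates the heavy lifting to \cite{LT} and uses duality rather than reproving the lower bound.
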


\begin{definition}
A basis $\{x_n\}_{n=1}^{\infty}$ of a Banach space X is called {\bf subsymmetric} if it is unconditional and, for every
increasing sequence of integers $\{n_i\}_{i=1}^{\infty}$, $\{x_{n_i}\}_{i=1}^{\infty}$ is equivalent to $\{x_n\}_{n=1}^{\infty}$.
By switching to an equivalent norm we may assume the subsymmetric basis is 1-unconditional and 1-equivalent to all
of its subsequences.
\end{definition}

\begin{definition}
Let $\{x_n\}_{n=1}^{\infty}$ be a symmetric or subsymmetric basis for a Banach space X. Let $0\not= \alpha = \sum_{j=1}^{\infty}b_jx_j\in X$.
Given a subset of $\mathbb N$ $\{n_1<n_2< \ldots\}$ partition this set into infinitely many disjoint subsets $\{(i,j)\}_{i,j=1}^{\infty}$. Then the {\bf block
basis generated by $\alpha$} is $\{U_i^{\alpha}\}_{i=1}^{\infty}$ where
\[ U_i^{\alpha}=\sum_{j=1}^{\infty}b_jx_{ij}\mbox{ for all }i=1,2,\ldots.\]
\end{definition}

\begin{remark} \label{R1}
We have to be careful with this. In the symmetric case, for different $\{n_i\}_{i=1}^{\infty}$ and different partitions $\{(i,j)\}_{i,j=1}^{\infty}$
the $\{U_i^{\alpha}\}_{i=1}^{\infty}$ are symmetric and equivalent to each other.
But for subsymmetric bases, neither of these properties may hold.
\end{remark}

Also in \cite{LT} we have

\begin{theorem}\label{T2}
If $\{x_n\}_{n=1}^{\infty}$ is a symmetric basis for a Banach space X, then $\{x_n\}_{n=1}^{\infty}$ and its dual basis $\{x_n^*\}_{n=1}^{\infty}$ are equivalent
to all their block bases generated by a single vector if and only if $\{x_n\}_{n=1}^{\infty}$ is equivalent to the unit vector basis of $c_0$ or
$\ell_p$ for some $1\le p <\infty$.
\end{theorem}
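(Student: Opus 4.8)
The plan is to prove the two implications separately; after passing to an equivalent norm I may assume the basis is $1$-symmetric, and I write $\phi(a)=\|\sum_k a_k x_k\|$ for the associated symmetric norm and $\lambda_n=\phi(e^{(n)})$ for the fundamental function, where $e^{(n)}=(1,\dots,1,0,\dots)$ has $n$ ones. For the easy direction, suppose $\{x_n\}$ is $K$-equivalent to the unit vector basis of $c_0$ or $\ell_p$. Fix $\alpha=\sum_j b_j x_j$. The vectors $U_i^{\alpha}$ have pairwise disjoint supports and, by symmetry, are permutations of one another, so a direct computation in $\ell_p$ gives $\|\sum_i a_i U_i^{\alpha}\|\approx \|\alpha\|\,(\sum_i|a_i|^p)^{1/p}=\|\alpha\|\,\|\sum_i a_i x_i\|$. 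Hence the normalized block basis $\{U_i^{\alpha}/\|\alpha\|\}$ is uniformly equivalent to $\{x_n\}$; the case of $c_0$ is identical with the sup norm. Since the dual of $c_0$ is $\ell_1$ and the dual of $\ell_p$ is $\ell_q$, the same conclusion holds for $\{x_n^*\}$.

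For the converse I first record the arithmetic meaning of the hypothesis. Writing the generated block basis in coordinates, $\sum_i a_i U_i^{\alpha}=\sum_{i,j}a_i b_j x_{ij}$, so equivalence of $\{U_i^{\alpha}/\|\alpha\|\}$ to $\{x_n\}$ says exactly that $\phi$ is multiplicative under tensor products, $\phi(a\otimes b)\approx \phi(a)\phi(b)$ with $(a\otimes b)_{ij}=a_i b_j$, and the dual hypothesis gives the same for $\phi^*$. The first technical point is to upgrade the a-priori $\alpha$-dependent equivalence constants to a single uniform constant: I would do this by a gliding-hump/uniform-boundedness argument, showing that unbounded constants would let one splice suitably scaled copies of the offending vectors into a single vector whose generated block basis is not equivalent to $\{x_n\}$, a contradiction.

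Applying tensor multiplicativity to the indicator vectors and using $e^{(m)}\otimes e^{(n)}=e^{(mn)}$ yields $\lambda_{mn}\approx \lambda_m\lambda_n$. Combined with the elementary sub/supermultiplicative estimates for a fundamental function, this forces $\lambda_n\approx n^{\theta}$ for a fixed $\theta\in[0,1]$; I set $p=1/\theta$, with $\theta=0$ corresponding to $c_0$. The same argument for $\phi^*$ gives $\lambda_n^*\approx n^{\theta^*}$, and the duality estimate $n\le \lambda_n\lambda_n^*\le Cn$ forces $\theta^*=1-\theta=1/q$.

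The heart of the matter is to pass from these scaling statements to an honest equivalence with $\ell_p$. The route I would take is to show that $\phi$ satisfies both an upper and a lower $p$-estimate (equivalently, is $p$-convex and $p$-concave on disjoint elements), since a symmetric basis with both estimates is equivalent to $\ell_p$, with the $\sup$ case giving $c_0$. Multiplicativity already yields, for disjoint copies of a single shape with arbitrary coefficients, the two-sided estimate $\|\sum_i a_i U_i^{\alpha}\|\approx \|\alpha\|\,(\sum_i|a_i|^p)^{1/p}$; the primal condition is used to extract the upper $p$-estimate and the dual condition, through the duality between upper $q$-estimates for $\phi^*$ and lower $p$-estimates for $\phi$, to extract the lower one. (Alternatively, once every normalized block basis is shown equivalent to $\{x_n\}$, the Lindenstrauss--Tzafriri theorem quoted above closes the argument.) The main obstacle, and the step I expect to demand the most care, is precisely this upgrade: the hypothesis only controls blocks that are copies of one fixed vector, whereas the $p$-estimates (equivalently, equivalence to an arbitrary normalized block basis) concern disjoint blocks of genuinely different shapes. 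This is also where both the primal and dual conditions are indispensable; for instance the Lorentz space $\ell_{p,1}$ satisfies the primal condition and an upper $p$-estimate yet fails $p$-concavity, and it is excluded exactly because its dual $\ell_{q,\infty}$ violates the dual single-vector hypothesis.
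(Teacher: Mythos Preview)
The paper does not give its own proof of this statement: Theorem~\ref{T2} is quoted from \cite{LT} as background. The closest thing in the paper to a proof is the argument for the main theorem in Section~2, whose implication $(2)\Rightarrow(1)$ follows exactly the skeleton you propose: use the single-vector block hypothesis with $\alpha=\lambda(n)^{-1}\sum_{i\le n}x_i$ to get $\lambda(mn)\approx\lambda(m)\lambda(n)$, deduce $\lambda(n)\approx n^{1/p}$ via \cite{LT} (2.a.9), get $\mu(n)\approx n^{1/q}$ from the duality inequality $n\le\lambda(n)\mu(n)\le 2n$, then obtain the upper $p$-estimate by invoking \cite{LT} (3.a.10) verbatim, and finally the lower $p$-estimate by applying the same to $\{x_n^*\}$ and dualizing. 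Your gliding-hump step to upgrade the $\alpha$-dependent constants to a uniform one is also standard and is what makes the Section~2 hypotheses match the classical ones.

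There is, however, a genuine gap in your sketch at the point you yourself flag. You write that ``multiplicativity already yields, for disjoint copies of a single shape with arbitrary coefficients, the two-sided estimate $\|\sum_i a_i U_i^{\alpha}\|\approx\|\alpha\|\,(\sum_i|a_i|^p)^{1/p}$''. This is circular: the hypothesis only gives $\|\sum_i a_i U_i^{\alpha}\|\approx\|\alpha\|\,\phi(a)$, and replacing $\phi(a)$ by $(\sum|a_i|^p)^{1/p}$ is precisely the conclusion you are trying to reach. Knowing $\lambda_n\approx n^{1/p}$ does not by itself let you evaluate $\phi$ on general sequences. The actual mechanism in \cite{LT} (3.a.10), which the paper simply cites, is a concrete combinatorial construction: given $a=(a_1,\dots,a_m)$ one builds an auxiliary vector (roughly, replacing each $a_i$ by a constant block of a carefully chosen length so that all blocks have comparable norm) and applies the single-vector block hypothesis to \emph{that} vector to compare $\phi(a)$ with a value of $\lambda$. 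Your alternative route through ``every normalized block basis equivalent to $\{x_n\}$'' has the same missing bridge, since passing from single-shape blocks to arbitrary blocks is again the whole difficulty. Your Lorentz-space remark is exactly right and explains why both the primal and dual hypotheses are needed, but you still need to supply (or cite) the \cite{LT} (3.a.10) argument rather than assert the $p$-estimate.
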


Recently \cite{C} the subsymmetric version of this was proved.

\begin{theorem} If $\{x_n\}_{n=1}^{\infty}$ is a subsymmetric basis for a Banach space X then $\{x_n\}_{n=1}^{\infty}$ and $\{x_n^*\}_{n=1}^{\infty}$
are both equivalent to all block bases generated by a single vector if and only if $\{x_n\}_{n=1}^{\infty}$ is equivalent to the unit vector basis
of either $c_0$ or $\ell_p$ for some $1\le p < \infty$.
\end{theorem}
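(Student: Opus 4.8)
The forward implication is a direct computation, so I would dispose of it quickly. If $\alpha=\sum_j b_jx_j$ and the copies $U_i^\alpha=\sum_j b_jx_{ij}$ are laid down over disjoint index sets, then in $\ell_p$ one has $\|\sum_i a_iU_i^\alpha\|_p^p=\|\alpha\|_p^p\sum_i|a_i|^p$, so after scaling $\{U_i^\alpha\}$ is isometrically the unit vector basis; the case of $c_0$ is identical, and since the dual basis spans $\ell_q$ (resp.\ $\ell_1$) it is handled the same way. The substance is the converse.

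For the converse, normalize so that $\{x_n\}$ is $1$-subsymmetric and write $\lambda(n)=\|\sum_{i=1}^n x_i\|$ for its fundamental function, so $1\le\lambda(n)\le n$. Read uniformly — a single constant $K$ for all generators and all partitions, which is the content of the phrase \emph{uniformly equivalent} in the abstract and is exactly what will exclude Lorentz-type spaces (for which each fixed generator does give an equivalence, but with constants that blow up as the generator spreads out) — the hypothesis says that for every finitely supported $\alpha$ and every scalar sequence $c=(c_i)$,
\[
\tfrac1K\,\|\alpha\|\,\Big\|\sum_i c_i x_i\Big\|\ \le\ \Big\|\sum_i c_i\,U_i^\alpha\Big\|\ \le\ K\,\|\alpha\|\,\Big\|\sum_i c_i x_i\Big\|,
\]
together with the analogous inequalities for $\{x_n^*\}$. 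Writing $\alpha\otimes c$ for the vector built from disjoint copies of $\alpha$ scaled by the $c_i$, this is precisely the statement that the sequence norm is multiplicative on tensors, $\|\alpha\otimes c\|\approx_K\|\alpha\|\,\|c\|$.

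The first step is to extract the exponent. Taking $\alpha=\sum_{i=1}^m x_i$ and $c=\sum_{i=1}^n x_i$ gives the two–sided estimate $\lambda(mn)\approx_K\lambda(m)\lambda(n)$, so $\log\lambda$ is approximately additive under multiplication; a standard argument (Hyers–Ulam, or Fekete applied separately to $\log(K\lambda)$ and $\log(\lambda/K)$) then yields $\lambda(n)\approx n^{1/p}$ for a well-defined $1\le p\le\infty$, the value $p=\infty$ ($\lambda$ bounded) being the candidate $c_0$. Running the same computation on the dual basis gives $\lambda^*(n)\approx n^{1/q}$, and the standard duality relation $\lambda(n)\lambda^*(n)\approx n$ forces $1/p+1/q=1$.

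The decisive step, and the one I expect to be the main obstacle, is upgrading ``$\lambda(n)\approx n^{1/p}$'' to genuine equivalence with $\ell_p$: the fundamental function alone does not determine the space, since Lorentz spaces share the fundamental function $n^{1/p}$ without being $\ell_p$, so I must use the full multiplicativity $\|\alpha\otimes c\|\approx_K\|\alpha\|\,\|c\|$ and its dual. The plan is to convert multiplicativity into matching upper and lower $p$-estimates for arbitrary disjoint blocks, $N(\sum_i y_i)\approx(\sum_i N(y_i)^p)^{1/p}$, from which $N\approx\|\cdot\|_p$ follows for a subsymmetric basis with $\lambda(n)\approx n^{1/p}$ (and $c_0$ in the degenerate case); one estimate is produced directly on $X$ from self-similarity and the other by dualizing the corresponding estimate on $X^*$. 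The genuine difficulty is that self-similarity only controls families of copies of a \emph{single} shape, whereas the $p$-estimates involve blocks of \emph{different} shapes; bridging this gap — decomposing arbitrary blocks into level sets, comparing them to copies of fixed generators via subsymmetry, and verifying that the constants remain bounded independently of the number and shapes of the blocks — is where the uniformity of $K$ is indispensable and where the real work lies. Once $N\approx\|\cdot\|_p$ is established (equivalently, once $\{x_n\}$ is shown equivalent to all of its normalized block bases, so that the block basis theorem of \cite{LT} applies), the theorem follows.
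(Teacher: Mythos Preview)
The paper does not prove this theorem; it is quoted from \cite{C} as background. What the paper \emph{does} prove is the closely related main result (the theorem in Section~2 with the three equivalent conditions), whose implication $(2)\Rightarrow(1)$ is the finitely-supported, uniform-constant analogue of the converse you are attempting. Your outline matches that proof almost step for step: extract $\lambda(mn)\approx_K\lambda(m)\lambda(n)$ from the constant-coefficient generator, deduce $\lambda(n)\approx n^{1/p}$, use $n\le\lambda(n)\mu(n)\le 2n$ to place the dual fundamental function at $n^{1/q}$, then upgrade to full $\ell_p$-equivalence. For that last step, which you correctly flag as the crux and leave only as a plan, the paper simply invokes \cite{LT}, Theorem~3.a.10, to obtain the upper $p$-estimate $\|\sum a_ix_i\|\le 2K(\sum|a_i|^p)^{1/p}$ (and the upper $q$-estimate on the dual side), and then dualizes the latter to get the matching lower $p$-estimate. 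No decomposition into level sets is needed; the work you anticipate is absorbed into the cited result.

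However, you have conflated two different statements. The theorem you were asked to prove concerns a subsymmetric basis, allows \emph{infinitely} supported generators $\alpha$ (see Definition~1.5), and assumes only equivalence, not uniform equivalence with a single $K$. Your ``read uniformly'' imports the phrase from the abstract, but that phrase describes the paper's main theorem in Section~2, not this one. The distinction is not cosmetic: by the first theorem of Section~2, \emph{every} finitely supported block generated by a vector in a subsymmetric space is automatically equivalent to the basis, so restricting to finite support without uniformity renders the hypothesis vacuous --- this is exactly the point of Remark~1.9. To prove the stated theorem you must either exploit infinitely supported generators directly, or supply an argument that the hypothesis forces a uniform equivalence constant; you have done neither, and the paper offers no help here since it defers entirely to \cite{C}.
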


\begin{remark}
In the theorems above it is important that we work with possibly infinitely supported vectors since the theorems
fail if we can only work with finitely supported vectors as we will see in the next section. 
\end{remark}

In this paper we look at finitely supported block bases generated by a single vector and prove the corresponding
results for this family but need the additional requirement that the equivalences are uniform.

\section{Bases Equivalent to the unit vector basis of $c_0$ or $\ell_p$}

\begin{definition}
Let $\{x_n\}_{n=1}^{\infty}$ and $0\not= \alpha =\sum_{n=1}^mb_nx_n$. The (finitely supported)
{\bf block basis generated by $\alpha$} is the block basis
$\{U_i^{\alpha}\}_{i=1}^{\infty}$ where
\[ U_i^{\alpha}=\sum_{n=(i-1)m+1}^{im}b_{n-km}x_n,\mbox{ for all }i=1,2,\ldots.\]
\end{definition}

For subsymmetric bases, blocks generated by a single vector are always equivalent to the basis, but as we will see, they
may not be uniformly equivalent.

\begin{theorem}
Let $\{x_n\}_{n=1}^{\infty}$ be a subsymmetric basis for a Banach space X. Assume $0\not= \alpha = \sum_{n=1}^mb_nx_n\in X$ and
$\{U_i^{\alpha}\}_{i=1}^{\infty}$ is the block basis generated by $\alpha$. Then $\{U_i^{\alpha}\}_{i=1}^{\infty}$ is equivalent to 
$\{x_n\}_{n=1}^{\infty}$.
\end{theorem}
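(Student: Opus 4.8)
The plan is to first pass to the $1$-unconditional, $1$-subsymmetric renorming that the definition of a subsymmetric basis permits. This is harmless: replacing the norm by an equivalent one preserves the property that two bases are equivalent (it only affects the value of the constant $K$), so it suffices to prove the equivalence in the renormed space. Working there, I will sandwich $\|\sum_i a_i U_i^\alpha\|$ between two fixed multiples of $\|\sum_i a_i x_i\|$ by elementary estimates.

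First I would regroup the block expansion coordinate-by-coordinate. Since $U_i^\alpha=\sum_{j=1}^m b_j x_{(i-1)m+j}$, for any scalars $\{a_i\}$ we have
\[
\sum_i a_i U_i^\alpha=\sum_{j=1}^m b_j\sum_i a_i x_{(i-1)m+j}.
\]
For each fixed $j$ the indices $(i-1)m+j$, $i=1,2,\dots$, form a strictly increasing sequence, so by $1$-subsymmetry $\bigl\|\sum_i a_i x_{(i-1)m+j}\bigr\|=\bigl\|\sum_i a_i x_i\bigr\|$. The upper estimate is then immediate from the triangle inequality:
\[
\Bigl\|\sum_i a_i U_i^\alpha\Bigr\|\le\sum_{j=1}^m|b_j|\,\Bigl\|\sum_i a_i x_{(i-1)m+j}\Bigr\|=\Bigl(\sum_{j=1}^m|b_j|\Bigr)\Bigl\|\sum_i a_i x_i\Bigr\|.
\]

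For the lower estimate I would use $1$-unconditionality. Since $\alpha\neq0$ there is an index $j_0$ with $b_{j_0}\neq0$, which I may take to maximize $|b_{j_0}|$. The coordinate projection $P$ onto the closed span of $\{x_{(i-1)m+j_0}:i\ge1\}$ has norm at most $1$ because the basis is $1$-unconditional, and, since these coordinates are pairwise distinct and avoid the coordinates carrying the other $b_j$, one computes $P\bigl(\sum_i a_i U_i^\alpha\bigr)=b_{j_0}\sum_i a_i x_{(i-1)m+j_0}$. Hence, again by $1$-subsymmetry,
\[
\Bigl\|\sum_i a_i U_i^\alpha\Bigr\|\ge\Bigl\|P\Bigl(\sum_i a_i U_i^\alpha\Bigr)\Bigr\|=|b_{j_0}|\,\Bigl\|\sum_i a_i x_{(i-1)m+j_0}\Bigr\|=|b_{j_0}|\,\Bigl\|\sum_i a_i x_i\Bigr\|.
\]
Combining the two displays shows that the operator $x_i\mapsto U_i^\alpha$ and its inverse are bounded, so $\{U_i^\alpha\}$ is $K$-equivalent to $\{x_n\}$ with $K=\max\bigl(\sum_{j=1}^m|b_j|,\,1/\max_j|b_j|\bigr)$.

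I do not anticipate a genuine obstacle: each inequality is a one-line consequence of subsymmetry and unconditionality once the coordinates are regrouped by their position within a block. The only point deserving care is that $K$ depends on $\alpha$, and in particular on the ratio $\sum_j|b_j|/\max_j|b_j|$. Thus the argument yields equivalence for each fixed $\alpha$ but gives no control uniform in $\alpha$; this is consistent with the remark that these blocks need not be uniformly equivalent, and it is precisely the uniformity that the main theorem later imposes as a hypothesis.
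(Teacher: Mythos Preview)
Your proof is correct and follows essentially the same approach as the paper: pass to the $1$-unconditional, $1$-subsymmetric renorming, regroup $\sum_i a_i U_i^\alpha$ as $\sum_{j=1}^m b_j\sum_i a_i x_{(i-1)m+j}$, use subsymmetry to identify each inner sum with $\|\sum_i a_i x_i\|$, and then obtain the upper bound by the triangle inequality and the lower bound by unconditionally projecting onto a single nonzero coordinate $j_0$. The paper presents both estimates in a single chain of inequalities and is a bit casual about absolute values, while you separate the two directions and optimize the constant by choosing $j_0$ with $|b_{j_0}|$ maximal, but the argument is the same.
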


\begin{proof}
We may assume that the basis is 1-unconditional and 1-equivalent to all of its subsequences. Assume $b_j\not= 0$ for some fixed
$1\le j \le m$.
We compute for any $\sum_{n=1}^{\infty}a_nx_n$:
\begin{align*}
\|\sum_{n=1}^{\infty}a_nx_n\|&=\|\sum_{i=1}^{\infty}a_ix_{(i-1)m+j}\|\\
&=\frac{1}{b_j}\|\sum_{i=1}^{\infty}a_ib_jx_{(i-1)m+j}\|\\
&\le \frac{1}{b_j}\|\sum_{i=1}^{\infty}a_iU_i^{\alpha}\|\\
&\le \frac{1}{b_j}\sum_{k=1}^mb_k\|\sum_{i=1}^{\infty}a_ix_{(i-1)m+k}\|\\
&=\frac{1}{b_j}\left ( \sum_{k=1}^mb_k\right ) \|\sum_{i=1}^{\infty}a_ix_i\|.
\end{align*}
This completes the stated equivalence.
\end{proof}

Now we can prove the main result of the paper.

\begin{theorem}
Let $\{x_n\}_{n=1}^{\infty}$ be a unconditional basis for a Banach space X. The following are equivalent:
\begin{enumerate}
\item $\{x_n\}_{n=1}^{\infty}$ is equivalent to the unit vector basis for $c_0$ or $\ell_p$ for $1\le p < \infty$.
\item There is a constant $1\le K$ so that whenever $\alpha = \sum_{n=1}^mb_nx_n$ is a unit vector, the block basis generated
by $\alpha$ is K-equivalent to $\{x_n\}_{n=1}^{\infty}$, the the same result holds for $\{x_n^*\}_{n=1}^{\infty}$.
\item There is a constant $1\le K$ so that whenever $\alpha = \sum_{n=1}^mb_nx_n$ is a unit vector then the block basis
$\{U_i^{\alpha}\}_{i=1}^{\infty}$ generated by $\alpha$ is complemented in the space by a projection $P$ with $\|P\|\le K$,
and the same is true for $\{x_n^*\}_{n=1}^{\infty}$.
\end{enumerate}
\end{theorem}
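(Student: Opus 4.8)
The plan is to prove the cycle of implications $(1)\Rightarrow(2)\Rightarrow(3)\Rightarrow(1)$, using the preceding theorems as the key leverage. The implication $(1)\Rightarrow(2)$ should be the easy direction: if $\{x_n\}$ is $C$-equivalent to the unit vector basis of $c_0$ or $\ell_p$, then for a unit vector $\alpha=\sum_{n=1}^m b_n x_n$, the block basis $\{U_i^\alpha\}$ is itself (up to the fixed constant $C$) a block basis in $c_0$ or $\ell_p$. Because in $c_0$ and $\ell_p$ any normalized block basis of the unit vector basis is $1$-equivalent to the unit vector basis, and the $U_i^\alpha$ all have the same norm (namely the norm of $\alpha$, which is $1$ after normalizing), the resulting equivalence constant $K$ depends only on $C$ and not on $\alpha$. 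The dual basis $\{x_n^*\}$ is the unit vector basis of the conjugate space (again $c_0$ or $\ell_q$), so the identical argument applies to it; I would simply remark that the dual of $c_0$ is $\ell_1$ and of $\ell_p$ is $\ell_q$, each of which is again of the required form.

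For $(2)\Rightarrow(3)$ the idea is that a block basis which is $K$-equivalent to the whole basis is automatically uniformly complemented, provided the basis is unconditional. Here I would use the natural averaging-type projection: given the block basis $\{U_i^\alpha\}$ with $\alpha=\sum_{n=1}^m b_n x_n$ normalized, one writes the projection $P$ onto $\overline{\mathrm{span}}\{U_i^\alpha\}$ explicitly using the biorthogonal functionals. Specifically, choosing $j$ with $b_j\neq 0$, one can define $P(\sum a_n x_n)=\sum_i \bigl(\tfrac{1}{b_j}\langle \sum_n a_n x_n, x_{(i-1)m+j}^*\rangle\bigr)U_i^\alpha$ and estimate $\|P\|$ using the $K$-equivalence together with unconditionality and the subsymmetric/block structure. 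The point is that the bound obtained will depend only on $K$, the unconditional constant, and the normalization of $\alpha$, hence is uniform over all unit vectors $\alpha$. The dual statement follows by the same construction applied to $\{x_n^*\}$.

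The implication $(3)\Rightarrow(1)$ is where the real work lies, and I expect it to be the main obstacle. The strategy is to show that uniform complementation of all single-generator blocks (for both $\{x_n\}$ and $\{x_n^*\}$) forces the basis to be, after passing to an equivalent norm, subsymmetric, and then to show the blocks are in fact uniformly equivalent to the basis, so that the hypotheses of Theorem~\ref{T2} (in its subsymmetric form, the theorem attributed to \cite{C}) are met. The delicate step is upgrading ``uniformly complemented'' to ``uniformly equivalent'': one must produce, from a uniform bound on the projection constants, a uniform two-sided estimate $\tfrac1K\|\sum a_i x_i\|\le \|\sum a_i U_i^\alpha\|\le K\|\sum a_i x_i\|$. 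The upper estimate comes directly from the triangle inequality and unconditionality as in the proof of the preceding theorem; the lower estimate is the one that genuinely uses complementation, since $\|P\|\le K$ gives $\|\sum a_i U_i^\alpha\|\ge \tfrac1K\|P(\sum a_i U_i^\alpha)\|$-type control on the coefficient recovery. I would want to be careful that the complementation hypothesis on the \emph{dual} basis is exactly what supplies the missing inequality for the primal basis (and vice versa), since complementation is a self-dual notion and the two conditions in (3) together are designed to pin down both sides.

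Finally, once both $\{x_n\}$ and $\{x_n^*\}$ are shown to be uniformly equivalent to all their single-generator blocks, I would invoke the subsymmetric version of Theorem~\ref{T2} to conclude that $\{x_n\}$ is equivalent to the unit vector basis of $c_0$ or $\ell_p$, completing the cycle. A subtlety to flag, in light of Remark~\ref{R1}, is that for subsymmetric (as opposed to symmetric) bases the block basis $\{U_i^\alpha\}$ genuinely depends on the chosen partition; I would fix the canonical consecutive-block partition used in the finitely-supported definition above and verify that the uniformity constant $K$ controls all of them simultaneously, so no ambiguity enters the argument.
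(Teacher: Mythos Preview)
Your cycle $(1)\Rightarrow(2)\Rightarrow(3)\Rightarrow(1)$ is not the one the paper uses, and two of your three implications have real problems.

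For $(2)\Rightarrow(3)$, the projection you write down, $P\bigl(\sum_n a_n x_n\bigr)=\sum_i \tfrac{1}{b_j}\,a_{(i-1)m+j}\,U_i^\alpha$, does \emph{not} have norm bounded uniformly in $\alpha$: after using the $K$-equivalence you still carry a factor $1/|b_j|$, and for a unit vector $\alpha=\sum_{n=1}^m b_n x_n$ the largest coefficient $|b_j|$ can be of order $m^{-1/p}$, so the bound blows up with $m$. Your claim that ``the bound obtained will depend only on $K$, the unconditional constant, and the normalization of $\alpha$'' is exactly the point that fails. The paper avoids this step entirely by proving the reverse implication $(3)\Rightarrow(2)$: given disjointly supported unit vectors $v_1,w_1$, it applies hypothesis (3) to the block basis generated by $u_1=v_1+w_1$, and then extracts the \emph{diagonal} of the resulting operator (using $1$-unconditionality) to compare $\{v_i\}$ with $\{w_i\}$; taking $v_1=x_1$ gives the uniform equivalence.

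For $(3)\Rightarrow(1)$, your plan to first deduce subsymmetry and then invoke the subsymmetric single-vector theorem has two gaps you do not address. First, condition (3) only concerns block bases built from consecutive length-$m$ windows, hence only arithmetic-progression subsequences $\{x_{(i-1)m+j}\}_i$; it is not clear how to get from this to equivalence with \emph{arbitrary} subsequences, which is what subsymmetry requires. Second, the previously stated theorems need block bases generated by possibly \emph{infinitely} supported vectors, and the paper explicitly warns that restricting to finite support breaks those theorems; you would have to pass from uniform finite-support information to the infinite-support hypothesis, and you give no mechanism for that. The paper's route is completely different and much more concrete: it proves $(2)\Rightarrow(1)$ directly by setting $\lambda(n)=\|\sum_{i=1}^n x_i\|$, using (2) with $\alpha_n=\lambda(n)^{-1}\sum_{i=1}^n x_i$ to obtain the two-sided submultiplicativity $K^{-1}\le \lambda(mn)/(\lambda(m)\lambda(n))\le K$, and then quoting the Lindenstrauss--Tzafriri machinery (Theorem~2.a.9, Proposition~3.a.6, Theorem~3.a.10) to conclude $\lambda(n)\asymp n^{1/p}$ and hence the $\ell_p$ (or $c_0$) equivalence, with the dual hypothesis in (2) supplying the matching estimate for $\mu(n)=\|\sum_{i=1}^n x_i^*\|$ and the lower bound by duality. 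None of this submultiplicativity-of-$\lambda(n)$ structure appears in your plan.
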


\begin{proof}
$(1)\Rightarrow (2), \ (3)$: This is obvious.
\vskip12pt

$(3)\Rightarrow (2)$: We may assume the basis is 1-unconditional.
Choose norm one vectors $v_1=\sum_{i=1}^mb_ix_i$ and $w_1=\sum_{i=m+1}^nc_ix_i$ and let $u_1=v_1+w_1$. Define
$u_i=v_i+w_i$ for $i=1,2,\ldots$ by
\[v_i=\sum_{j=(i-1)n+1}^{(i-1)n+m}b_{j-(i-1)n}x_i\mbox{ and }w_i=\sum_{j=(i-1)n+m+1}^{in}c_{j-(i-1)n}x_i.\]
By assumption, there is a projection $Q$ onto the block basis $\{u_i\}_{i=1}^{\infty}$ generated by the vector $u_1$ with $\|Q\|\le K$.
For each $i=1,2,\ldots$ assume
\[ Qv_i=\sum_{j=1}^{\infty}b_{ij}u_j\mbox{ and }Qw_i = \sum_{j=1}^{\infty}c_{ij}u_j.\]
Then $b_{ii}+c_{ii}=1$ for all $i=1,2,\ldots$. Define an operator $T:[Qv_i]\rightarrow [w_i]$ by $T(Qv_i)=w_i$ for all $i=1,2,\ldots$. So $\|TQ\|\le K$ and the matrix of $T$ is
$[b_{ij}+c_{ij}]$. And since the basis is 1-unconditional, the norm of the diagonal operator $D$ of $TQ$ is $\|D\|\le \|TQ\|$. But the 
diangonal operator maps $v_i$ to $w_i$. So for all scalars $\{a_i\}_{i=1}^{\infty}$ we have
\[ \|\sum_{i=1}^{\infty}a_iw_i\|=\|\sum_{i=1}^{\infty}a_iTv_i\|\le K \|\sum_{i=1}^{\infty}a_iv_i\|.\]
Switching the roles of $v_i$ and $w_i$ we have for all scalars $\{a_i\}_{i=1}^{\infty}$
\[ \|\sum_{i=1}^{\infty}a_iv_i \|\le K\|\sum_{i=1}^{\infty}a_iw_i\|\]
So $\{v_i\}_{i=1}^{\infty}$ is equivalent to $\{w_i\}_{i=1}^{\infty}$.
Letting $v_1=x_1$, we discover that $\{w_i\}_{i=1}^{\infty}$ is equivalent to $\{x_i\}_{i=1}^{\infty}$.

The same proof works for $\{x_n^*\}_{n=1}^{\infty}$.

\vskip12pt
$(2)\Rightarrow (1)$: We will do this in steps. For any $n\in \mathbb{N}$ define
\[ \lambda(n)=\|\sum_{i=1}^nx_i\|\mbox{ and }\mu(n)=\|\sum_{i=1}^nx_i^*\|.\]
\vskip12pt
\noindent {\bf Step 1}: We show that either $X$ is isomorphic to $c_0$ or $\ell_1$ or there is a $1<p$ so that for all $n\in \mathbb{N}$
\[\frac{1}{K}\le \frac{\lambda(n)}{n^{1/p}}\le K.\]
\vskip12pt
For this, fix $n\in \mathbb{N}$ and let $\alpha_n=\frac{1}{\lambda(n)}\sum_{i=1}^nx_i$.
Consider the block basis generated by $\alpha_n$ as
\[ U_i^{\alpha_n}=\frac{1}{\lambda(n)}\sum_{j=(i-1)n+1}^{in}x_i,\mbox{ for all }i=1,2,\ldots.\]
By (2), for every $m=1,2,\ldots$ we have
\[ \frac{1}{K}\lambda(m)\le \| \sum_{i=1}^mU_i^{\alpha_n}\| = \frac{1}{\lambda_n}\lambda(mn)\le K\lambda(m).
\]
Hence,
\[\frac{1}{K}\le \frac{\lambda(mn)}{\lambda(n)\lambda(m)}\le K.\]
By the proof of \cite{LT} Theorem (2,a,9) either $sup_n\lambda(n)<\infty$, or we have for some $1\le p < \infty$,
\begin{equation}\label{E5} \frac{1}{K}\le \frac{\lambda(n)}{n^{1/p}} \le K,\mbox{ for all }n=1,2,\ldots.
\end{equation}
If $sup_n\lambda_n<\infty$ then X is isomorphic to $c_0$ and if $p=1$ then X is isomorphic to $\ell_1$ by \cite{LT} the remark 
following (3,a,7). So we assume $1<p<\infty$ and choose $1<q$ so that $\frac{1}{p}+\frac{1}{q}=1$.
\vskip12pt
\noindent {\bf Step 2}: We have for all $n\in \mathbb{N}$
\[ \frac{1}{2K}\le \frac{\mu(n)}{n^{1/q}}\le 2K.\]
\vskip12pt
For this, we apply \cite{LT} Proposition (3,a,6), page 116:
\[ n\le \lambda(n)\mu(n)\le 2n.\]
Hence
\[1 \le \frac{\lambda(n)\mu(n)}{n}=\frac{\lambda(n)}{n^{1/p}}\frac{\mu(n)}{n^{1/q}}\le 2.
\]
The result is now immediate from Equation \ref{E5}.
\vskip12pt
\noindent {\bf Step 3}: For all scalars $\{a_i\}_{i=1}^{\infty}$ we have
\[ \left \|\sum_{i=1}^{\infty}a_ix_i\right \|\le 2K\left ( \sum_{i=1}^{\infty}|a_i|^{p}\right )^{1/p}.\]
The same is true for all $\{a_i\}_{i=1}^{\infty}$
\[ \|\sum_{i=1}^{\infty}a_ix_i^*\|\le 2K\left ( \sum_{i=1}^{\infty}|a_i|^q\right )^{1/q}.\]
\vskip12pt
For this, we can copy line by line \cite{LT} Theorem (3,a,10), page121.
\vskip12pt
\noindent {\bf Step 5}:We conclude the proof.
\vskip12pt
By steps (2), (3), and (4) we have for all scalars $\{a_i\}_{i=1}^{\infty}$
\[ \left \|\sum_{i=1}^{\infty}a_ix_i^*\right \|\le 2K\left ( \sum_{i=1}^{\infty}|a_i|^{q}\right )^{1/q}.\]
By duality, for all $\{a_i\}_{i=1}^{\infty}$ we have
\[ \left \|\sum_{i=1}^{\infty}a_ix_i\right \|\ge \frac{1}{2K}\left ( \sum_{i=1}^{\infty}|a_i|^{p}\right )^{1/p}.\]

This concludes the proof of the theorem.
\end{proof}

\begin{remark}
We would hope to not have to assume the basis is unconditional since (3) implies that every subsequence of the basis
is uniformly complemented. Unfortunately, this property does not imply unconditionality of the basis as we see below.
\end{remark}
\noindent {\bf Example}:
Lwt $\{x_i\}_{i=1}^{\infty}$ be the row vectors of the matrix:
\[ \begin{bmatrix}
1&0&0&0&0&\cdots\\
1&1&0&0&0&\cdots\\
1&1&1&0&0&\cdots\\
\vdots&\vdots&\vdots&\vdots&\vdots&
\end{bmatrix} \]
The set $\{x_i\}_{i=1}^{\infty}$ is called the summing basis of $c_0$. We will now show that every subsequence of
this basis is 1-complemented. Fix natural numbers $n_0=0<n_1<n_2< \cdots$. For $x=\sum_{i=1}^{\infty}a_ix_i$ 
in terms of the unit vector basis this vector is:
\[ x=\left (\sum_{i=1}^{\infty}a_i,\sum_{i=2}^{\infty}a_i,\sum_{i=3}^{\infty}a_i,\ldots\right ),\]
and so the norm of this vector is:
\[ \|x\|=max_{i=1}^{\infty}|\sum_{j=i}^{\infty}a_j|.
\]
Define a projection $P$ by:
\[ P(x)=\sum_{i=0}^{\infty} \left ( \sum_{j=n_i+1}^{n_{i+1}}a_j\right )x_{n_i}.\]
It is clear that this is a projection and in terms of the unit vector basis this is
\[Px=\left (\sum_{i=1}^{\infty}a_i,\ldots,\sum_{i=1}^{\infty}a_i(n_1),\sum_{i=2}^{\infty}a_i(n_1+1),
\ldots,\sum_{i=2}^{\infty}a_i(n_2),
\sum_{i=3}^{\infty}a_i(n_2+1),\ldots,\sum_{i=3}^{\infty}a_i(n_3),\ldots 
\right )
\]
Hence,
\[ \|Px\|=max_{i=1}^{\infty}|\sum_{j=i}^{\infty}a_j|=\|x\|.\]

An obvious question is:

\begin{problem}
Do we need the conditions in Theorem to include both $\{x_n\}_{n=1}^{\infty}$ and $\{x_n^*\}_{n=1}^{\infty}$ or is just
one of these enough?
\end{problem}

\end{document}